\documentclass[11pt]{amsart}
\usepackage{amsmath,amssymb,amsfonts,amsthm,amscd,indentfirst}
\textheight 8.5in
\textwidth 6 in
\topmargin 0.0cm
\oddsidemargin 0.5cm \evensidemargin 0.5cm
\parskip 0.0cm
\usepackage{amsmath,amssymb,amsfonts,amsthm,amscd, color}
\usepackage{indentfirst}
\usepackage{hyperref}

\numberwithin{equation}{section}

\newtheorem{prop}{Proposition}[section]
\newtheorem{theo}[prop]{Theorem}

\newtheorem{coro}[prop]{Corollary}

\newtheorem{defi}[prop]{Definition}

\theoremstyle{remark}
\newtheorem{rema}{Remark}[section]

\def\and{\quad{\rm and}\quad}

\def\p{\partial}
\def\m{\mathfrak{m}}
\def\E{\mathcal{E}}

\def\<{\langle}
\def\>{\rangle}

\def\Sh{\Sigma_{_H}}
\def\Sht{\Sh^{(t)}}
\def\be{\begin{equation}}
\def\ee{\end{equation}}

\usepackage{graphicx}


\begin{document}

\title{Rigidity of Riemannian Penrose inequality with corners and its implications}

\author[Siyuan Lu]{Siyuan Lu}
\address{Department of Mathematics and Statistics, McMaster University, 
1280 Main Street West, Hamilton, ON, L8S 4K1, Canada.}
\email{siyuan.lu@mcmaster.ca}

\author[Pengzi Miao]{Pengzi Miao}
\address{Department of Mathematics, University of Miami, Coral Gables, FL 33146, USA.}
\email{pengzim@math.miami.edu}

\begin{abstract}
Motivated by the rigidity case in the localized Riemannian Penrose inequality, 
we show that suitable singular metrics attaining the optimal value 
in the Riemannian Penrose inequality is necessarily smooth in properly specified coordinates. 
If applied to hypersurfaces enclosing the horizon in a spatial Schwarzschild manifold, the result
gives the rigidity of isometric hypersurfaces with the same mean curvature. 
\end{abstract}

\maketitle 

\markboth{Siyuan Lu and Pengzi Miao}{Rigidity of Riemannian Penrose inequality with corners}

\section{Introduction}

In this paper, we prove a rigidity theorem for a pair of Riemannian manifolds with nonnegative scalar curvature,
with boundary. The theorem may be viewed as the rigidity part of the Riemannian Penrose inequality on manifolds 
with corners along a hypersurface. 
We assume all manifolds have dimension $ n \le 7$. 

\begin{theo} \label{thm-main}
Let $ (\Omega^n, g_{_\Omega})$, $ (N^n, g_{_N})$  be a compact manifold, an asymptotically flat manifold,
with nonnegative scalar curvature, 
with boundary $\p \Omega$, $ \p N$,  respectively.
Suppose the boundaries $ \p \Omega$ and $ \p N$ satisfy the following
\begin{enumerate}
\item[(i)]  $\p \Omega$  is the disjoint union of two pieces, $\Sh$ and $ \Sigma $, 
where $\Sh$ has zero mean curvature and is strictly outer-minimizing in $(\Omega, g_{_\Omega})$;
\item[(ii)]  $\p N$ is outer-minimizing in $(N, g_{_N})$; and
\item[(iii)] $\Sigma$ is isometric to $ \p N$, with the induced metrics; and under the isometry, 
$ H_{_{\Omega} } \ge H_{_N}$, where $ H_{_\Omega}$ is the mean curvature of $ \Sigma$ in 
$(\Omega, g_{_\Omega})$ with respect to the outward normal, and $ H_{_N}$ is the mean curvature of $ \p N$ 
in $(N, g_{_N})$ with respect to the infinity pointing normal.
\end{enumerate}
Let $ \m (g)$ be the mass of $(N, g)$ and let $ | \Sh |$ be the area of $ \Sh$ in $(\Omega, g_{_\Omega})$.
Suppose
$$ \m (g) = \frac12 \left( \frac{ |\Sh|}{  \omega_{n-1} } \right)^\frac{n-2}{n-1} , $$
where $ \omega_{n-1}$ is the area of the standard round $(n-1)$-dimensional sphere. Then
\begin{itemize}
\item $\Sigma $ and $ \p N$ have the same second fundamental forms; 
\item $\Sigma $ (and hence $\p N$) isometrically embeds in a spatial Schwarzschild manifold
$$ (\mathbb{M}_m, g_m) = \left( \left\{ x \in \mathbb{R}^{n}  :  |x| \ge  \left( \frac{m}{2} \right)^\frac{1}{n-2}  \right\}, 
\left( 1 + \frac{m}{2}  | x |^{2-n}  \right)^\frac{4}{n-2} g_{0} \right)  $$
with mass $ m = \m (g)$. Here $g_0$ is the Euclidean metric on $\mathbb{R}^n$.
Moreover, the image of this embedding and the Schwarzschild horizon $ \p \mathbb{M}_m$ 
 enclose a bounded domain $\Omega_m$ in $ \mathbb{M}_m$;  and
\item $(\Omega, g_{_\Omega})$ is isometric to $ (\Omega_m, g_m)$ and $(N, g_{_N})$ is isometric to the complement of 
$\Omega_m$ in $(\mathbb{M}_m, g_m)$.
\end{itemize}
\end{theo}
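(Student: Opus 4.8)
The plan is to glue $(\Omega,g_{_\Omega})$ and $(N,g_{_N})$ along the isometric hypersurfaces $\Sigma\cong\p N$ into a single asymptotically flat manifold $(M,g)$ carrying a metric corner along $\Sigma$, and then to read off all three conclusions from the fact that this (possibly singular) metric attains equality in the Riemannian Penrose inequality. Set $M=\Omega\cup_\Sigma N$ with $g=g_{_\Omega}$ on $\Omega$ and $g=g_{_N}$ on $N$; this is a Lipschitz metric, smooth off $\Sigma$, with nonnegative scalar curvature on $M\setminus\Sigma$. Along $\Sigma$ the mean curvature seen from the $\Omega$-side with respect to the unit normal pointing into $N$ is $H_{_\Omega}$, and from the $N$-side (the same normal, which on $N$ points toward infinity) it is $H_{_N}$; by (iii), $H_{_\Omega}\ge H_{_N}$, so the corner bends the correct way and the distributional scalar curvature of $g$, namely $R_g\,dV+(H_{_\Omega}-H_{_N})\,dA|_\Sigma$, is a nonnegative measure. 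Since $\Omega$ is compact, $M$ and $N$ have the same end, so $\m(M)=\m(g)$, while $\p M=\Sh$ is a closed minimal hypersurface.

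A standard cut-and-paste argument using (i) and (ii) shows that $\Sh=\p M$ is outer-minimizing in $(M,g)$: any closed hypersurface of $M$ enclosing $\Sh$ can be replaced, using that $\p N$ is outer-minimizing in $N$, by one lying in $\overline{\Omega}$ of no larger area, and that one has area $\ge|\Sh|$ because $\Sh$ is (strictly) outer-minimizing in $\Omega$. Hence the Riemannian Penrose inequality for asymptotically flat manifolds carrying a corner of this type — which, since $n\le7$, follows by combining the smooth inequality (Huisken--Ilmanen for $n=3$, Bray--Lee for $4\le n\le 7$) with a smoothing of the corner metric following Miao — applies and gives
\[
\frac12\left(\frac{|\Sh|}{\omega_{n-1}}\right)^{\frac{n-2}{n-1}}=\m(g)=\m(M)\ge\frac12\left(\frac{|\Sh|}{\omega_{n-1}}\right)^{\frac{n-2}{n-1}}.
\]
As the first and last quantities agree — the first being the hypothesized value of $\m(g)$ — equality holds throughout the Penrose inequality for the singular manifold $(M,g)$, with $\Sh$ realizing the horizon.

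The crux is the rigidity of this equality case for a \emph{singular} metric, and this is where I expect the real work to lie. The strategy is to smooth the corner — by Miao's explicit mollification followed by a conformal deformation to nonnegative scalar curvature, or by short-time Ricci--DeTurck flow — producing smooth asymptotically flat metrics approximating $g$, keeping the mass equal to $\m(g)$ up to $o(1)$, and whose outermost minimal hypersurfaces have area at least $|\Sh|-o(1)$. Feeding this sequence into the rigidity case of the smooth Riemannian Penrose inequality, together with the area lower bound coming from (i) and (ii), should force the singular part $(H_{_\Omega}-H_{_N})\,dA|_\Sigma$ to vanish — that is, $H_{_\Omega}=H_{_N}$ along $\Sigma$ — and, the corner thereby being absent, $(M,g)$ to be isometric, via a smooth isometry $\Phi$, to the spatial Schwarzschild manifold $(\mathbb{M}_m,g_m)$ with $m=\m(g)$, with $\Sh$ carried onto the horizon $\p\mathbb{M}_m$. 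The main obstacles are controlling how the smoothing moves the outermost minimal hypersurface and upgrading ``equality along an approximating sequence'' to exact Schwarzschild rigidity; this is precisely the assertion advertised in the abstract, that a singular metric attaining the optimal Penrose value must be smooth in suitable coordinates.

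Granting the rigidity statement, the remaining conclusions are immediate. Since $\Sigma\subset M$ is a compact hypersurface separating $\Sh$ from the asymptotically flat end, $\Phi(\Sigma)$ is a compact hypersurface in $\mathbb{M}_m$ separating the horizon $\p\mathbb{M}_m$ from infinity; letting $\Omega_m$ be the bounded region it bounds together with $\p\mathbb{M}_m$, the isometry $\Phi$ restricts to isometries $(\Omega,g_{_\Omega})\cong(\Omega_m,g_m)$ and $(N,g_{_N})\cong(\mathbb{M}_m\setminus\Omega_m,g_m)$ — the third conclusion — and $\Phi|_\Sigma$, together with the enclosure property just noted, is the second. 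Finally $g_m$ is smooth (indeed real-analytic) near $\Phi(\Sigma)$, so $\Phi(\Sigma)$ has the same second fundamental form viewed from $\Omega_m$ and from its complement; pulling back by $\Phi$, the second fundamental forms of $\Sigma$ in $(\Omega,g_{_\Omega})$ and of $\p N$ in $(N,g_{_N})$ coincide, which is the first conclusion.
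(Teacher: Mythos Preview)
Your reduction---glue to $(M,g)$, verify that $\Sh$ is outer-minimizing in the glued manifold, reduce everything to the rigidity case of the Riemannian Penrose inequality with corners, and then read off the three conclusions---is correct and matches the paper exactly. The gap is precisely where you locate it: you propose to obtain the singular rigidity by smoothing the corner and invoking the smooth rigidity statement on the approximants, but smooth rigidity says nothing about a sequence in which equality holds only in the limit, and no stability result is available to close this. You flag this as ``the main obstacle'' and then grant the conclusion rather than prove it; that is the missing idea.

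The paper does not smooth. Instead it runs Bray's conformal flow $g_t=u_t^{4/(n-2)}g$ on the singular metric itself for short time; since $g$ is Lipschitz and the flow only requires solving for harmonic functions, this causes no difficulty. One has $\frac{d}{dt}\big|_{t=0}\,\m(g_t)=\mathcal{E}(g)-2\m(g)$ while $|\Sht|_{g_t}$ is constant, so if $\m(g)>\tfrac12\mathcal{E}(g)$ the mass would drop strictly below the Penrose bound for small $t>0$, contradicting the Penrose inequality with corners applied to $g_t$. Hence $\m(g)=\tfrac12\mathcal{E}(g)$, and the rigidity of \emph{this} equality (Bray's mass--capacity inequality, but with corners) is handled separately: reflect across $\Sh$, conformally compactify one end, and invoke the rigidity of the positive mass theorem with corners due to McFeron--Sz\'{e}kelyhidi, sharpened here by the observation that their limiting isometry is actually $C^{1,1}$, which forces the second fundamental forms across $\Sigma$ to agree and hence $g$ to be smooth. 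Thus the paper replaces your limiting argument by a chain of exact reductions---singular Penrose rigidity to singular mass--capacity rigidity to singular PMT rigidity---the last of which is already available.
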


The condition that $ \Sh$ is strictly outer-minimizing in $(\Omega, g_{_\Omega})$ means that any hypersurface $ \Sigma' $ 
in $ \Omega$, which encloses $\Sh$,  has area strictly greater than $|\Sh|$. Similarly, 
$ \p N $ is outer-minimizing in $(N, g_{_N})$ means that any hypersurface $ \Sigma'' $ 
in $N$, which encloses $\p N$,  has area greater than or equal to the area of $\p N$.

In Theorem \ref{thm-main}, we state the conclusion in a geometric manner. 
From a more analytic perspective, our proof of Theorem \ref{thm-main} indeed
gives a regularity result that asserts suitable singular metrics realizing the optimal value 
in the Riemannian Penrose inequality is smooth in properly specified coordinates.  
We formulate this conclusion in the following remark.

\begin{rema} \label{rem-main}
Under the assumptions of Theorem \ref{thm-main}, one can consider a new differentiable manifold 
obtained from $\Omega$ and $N$ as follows.

Let $ U_+ $ be a Gaussian tubular neighborhood of $\p N$ in $(N, g_{_N})$ so that 
$U_+ $ is diffeomorphic to $  [ 0 , \epsilon ) \times \p N $ for some $ \epsilon > 0 $
and  $ g_{_N} = d t^2 + g_t^+$ in $U_+$. 
Similarly, let $U_-$ be a Gaussian tubular neighborhood of $\Sigma $ in $(\Omega , g_{_\Omega})$ so that 
$U_- $ is diffeomorphic to $  ( - \epsilon ,0] \times \Sigma  $ 
and $ g_{_\Omega} = d t^2 + g_t^-$ in $U_-$.
Here $\{ g_t^+ \}_{t \ge 0}$, $\{ g_t^- \}_{t \le 0 }$ denote a family of metrics on $ \p N$, $ \Sigma$, respectively. 
Since $ \Sigma$ is  isometric to $ \p N$, identifying $ \Sigma $ with $ \p N$ via the given isometry, 
one may assume $ g_t^+ = g_t^- $ at $ t =  0 $. 
Let $M$ be the topological manifold obtained by gluing $\Omega $ and $N$ so that 
$ \Sigma $ and $ \p N$ are identified via the given isometry. 
Define the differentiable structure on $M$ so that it is determined by the open covering consisting of 
$ \{ \Omega, N, U \} $, where 
$U = U_- \cup U_+ = (-\epsilon , \epsilon ) \times \Sigma $.

On this differentiable manifold $M$, consider a Lipschitz metric
$ g $ given by 
$$ g = g_{_\Omega} \ \text{on} \  \Omega, \ \ \text{and} \ \ 
 g  = g_{_N} \ \text{on}  \ N  . $$
If $(\Omega, g_{_\Omega})$ and $(N, g_{_N})$ satisfy $  \m (g) = \frac12 \left( \frac{ |\Sh|}{  \omega_{n-1} } \right)^\frac{n-2}{n-1} $, 
Theorem \ref{thm-rigidity} in Section \ref{sec-pf} shows that 
$g$ is smooth across $\Sigma$  and 
the manifold $(M^n, g)$ is isometric to $ (\mathbb{M}_m, g_m)$ with $m = \m (g)$.
\end{rema}

Our main motivation to consider Theorem \ref{thm-main} is the rigidity case of the localized Riemannian
Penrose inequality studied in \cite{LM17}. As a corollary of Theorem \ref{thm-main}, equality in Theorem 1.1 of \cite{LM17} holds 
if and only if the compact manifold in that setting is isometric to the domain enclosed by the image of the relevant 
isometric embedding into $ (\mathbb{M}_m, g_m)$ and the Schwarzschild horizon $\p \mathbb{M}_m$.

\medskip

Another motivation for us is to study the rigidity of isometric hypersurfaces with the same mean curvature in $(\mathbb{M}_m, g_m)$.
For convenience, we make the following definition. 

\begin{defi}
In an ambient Riemannian manifold, we say two hypersurfaces $\Sigma$ and $ \tilde \Sigma$ are H-isometric if 
there is an isometry $F: \Sigma \to \tilde \Sigma$ such that $\tilde H ( F (p) ) = H (p)$, $\forall \, p \in \Sigma$.
Here $ H$, $ \tilde H$ are the mean curvatures of $\Sigma$, $ \tilde \Sigma$, respectively. 
\end{defi}

A corollary of Theorem \ref{thm-main} and Remark \ref{thm-main} is 

\begin{coro} \label{cor-iso-hypersurface}
Let $ \Sigma \subset (\mathbb{M}_m, g_m)  $ be a closed hypersurface enclosing the horizon $\p \mathbb{M}_m$
in a spatial Schwarzschild manifold with mass $ m >0$. Suppose $\Sigma$ is outer-minimizing. 
If $ \tilde \Sigma $ is another hypersurface enclosing $\p \mathbb{M}$ which is H-isometric to $\Sigma$, then 
$\tilde \Sigma= \Sigma$ up to a rigid motion of $(\mathbb{M}_m, g_m)$.
\end{coro}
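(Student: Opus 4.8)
The plan is to cut the Schwarzschild manifold $(\mathbb{M}_m, g_m)$ along $\Sigma$ and along $\tilde\Sigma$ and then recombine the pieces so as to fit the hypotheses of Theorem \ref{thm-main}. Let $\Omega \subset \mathbb{M}_m$ be the compact region bounded by $\Sigma$ and the horizon $\p\mathbb{M}_m$, and let $\tilde N \subset \mathbb{M}_m$ be the unbounded region exterior to $\tilde\Sigma$, so $(\tilde N, g_m)$ is asymptotically flat with the same mass $m$. I will take $(\Omega^n, g_{_\Omega}) = (\Omega, g_m)$ and $(N^n, g_{_N}) = (\tilde N, g_m)$, with $\Sh = \p\mathbb{M}_m$ the Schwarzschild horizon and $\Sigma$ (respectively $\tilde\Sigma$) playing the role of the corner hypersurface on each side. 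Both pieces carry the flat scalar curvature $R \equiv 0$, which is nonnegative. The horizon $\p\mathbb{M}_m$ is a minimal sphere, and it is well known to be strictly outer-minimizing in $(\mathbb{M}_m, g_m)$, hence certainly in the smaller region $\Omega$; this gives hypothesis (i). Hypothesis (ii) is exactly the assumption that $\tilde\Sigma = \p\tilde N$ is outer-minimizing in $(\tilde N, g_m)$, which follows from the hypothesis that $\Sigma$ is outer-minimizing together with the H-isometry identifying $\tilde\Sigma$ with $\Sigma$ — more precisely, one transports the outer-minimizing property along the isometry $F$, using that an enclosing hypersurface in $\tilde N$ with smaller area would, under $F$ extended appropriately, contradict the outer-minimizing property in the original configuration. (If this transport is not immediate, one instead invokes directly that any outer-minimizing hypersurface in Schwarzschild exterior to the horizon is automatically outer-minimizing in the corresponding exterior region.) Hypothesis (iii) is precisely the H-isometry assumption: $\Sigma$ is isometric to $\tilde\Sigma = \p\tilde N$ and, under this isometry, $H_{_\Omega} = H_\Sigma = \tilde H = H_{\tilde N}$, so in particular the inequality $H_{_\Omega} \ge H_{_N}$ holds (with equality).

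Having verified the hypotheses, I compute the mass balance. Since $\Sh = \p\mathbb{M}_m$ is the Schwarzschild horizon, its area is $|\Sh| = \omega_{n-1}\, r_h^{n-1}$ where $r_h = (m/2)^{1/(n-2)}$ is the horizon radius in the chosen coordinates, and a direct computation (using the explicit conformal factor in the definition of $(\mathbb{M}_m, g_m)$) gives $|\Sh| = \omega_{n-1}(2m)^{(n-1)/(n-2)}$, equivalently
$$ \frac12\left(\frac{|\Sh|}{\omega_{n-1}}\right)^{\frac{n-2}{n-1}} = m = \m(g_m). $$
Thus the equality case of Theorem \ref{thm-main} is attained. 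Applying Theorem \ref{thm-main} (or the analytic reformulation in Remark \ref{rem-main} and Theorem \ref{thm-rigidity}), the glued manifold $M = \Omega \cup_{\Sigma \sim \tilde\Sigma} \tilde N$ with its Lipschitz metric $g$ is smooth across the corner and is isometric to $(\mathbb{M}_m, g_m)$; moreover $\Sigma$ and $\tilde\Sigma = \p N$ have the same second fundamental forms, and each embeds isometrically into $(\mathbb{M}_m, g_m)$ as the boundary of a region of the prescribed type. Since the embedding of $\Sigma \subset \Omega \subset M$ into $(\mathbb{M}_m, g_m)$ must (by the rigidity conclusion together with the identification of $\Omega$ with $\Omega_m$) coincide with the original inclusion $\Sigma \hookrightarrow \mathbb{M}_m$ up to an isometry of Schwarzschild, and likewise $\tilde\Sigma \subset \tilde N \subset M$ is carried to its original inclusion up to a Schwarzschild isometry, one concludes that $\Sigma$ and $\tilde\Sigma$ differ by a rigid motion of $(\mathbb{M}_m, g_m)$, i.e. $\tilde\Sigma = \Sigma$ up to such a motion.

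I expect the main obstacle to be the final bookkeeping step: Theorem \ref{thm-main} produces \emph{some} isometric embedding of the corner hypersurface into $(\mathbb{M}_m, g_m)$, and one must argue that this embedding agrees with the \emph{given} position of $\Sigma$ (and of $\tilde\Sigma$) inside $\mathbb{M}_m$ up to an ambient isometry, rather than being an a priori unrelated embedding. The cleanest route is to use the stronger conclusion of the theorem that $(\Omega, g_{_\Omega}) \cong (\Omega_m, g_m)$ and $(N, g_{_N}) \cong \mathbb{M}_m \setminus \Omega_m$: the isometry $\Omega \to \Omega_m$ restricts to a Schwarzschild isometry on the horizon and carries $\Sigma$ onto $\p\Omega_m$, while the original inclusion $\Omega \hookrightarrow \mathbb{M}_m$ is a second such realization; comparing the two via the fact that an isometry of a Schwarzschild region fixing a neighborhood of (part of) the horizon extends to a global Schwarzschild isometry then pins down $\Sigma$ up to a rigid motion, and the same argument applied on the $N$-side pins down $\tilde\Sigma$. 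A secondary, more routine point is confirming that the outer-minimizing hypothesis on $\Sigma$ really does yield hypothesis (ii) for $\tilde N$; here one should note that "outer-minimizing in $(\mathbb{M}_m,g_m)$" and "outer-minimizing in the exterior region it bounds" coincide, since competitor hypersurfaces enclosing $\tilde\Sigma$ lie in the exterior region anyway.
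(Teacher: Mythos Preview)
Your setup has the roles of $\Sigma$ and $\tilde\Sigma$ reversed relative to the paper, and this creates a real gap when you check hypothesis (ii). You need $\partial N$ to be outer-minimizing in $N$; with your choice $\partial N = \tilde\Sigma$, but the only outer-minimizing assumption available is on $\Sigma$. Your attempt to ``transport the outer-minimizing property along the isometry $F$'' does not work: $F$ is only an isometry of the \emph{hypersurfaces} $\Sigma$ and $\tilde\Sigma$ with their induced metrics, not of their exterior regions, so there is no way to pull back a competitor hypersurface in $\tilde N$ to a competitor for $\Sigma$. The parenthetical fallback does not help either, since nothing in the hypotheses tells you $\tilde\Sigma$ is outer-minimizing to begin with. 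The paper avoids this simply by swapping: it takes $\Omega$ to be the region enclosed by $\tilde\Sigma$ and the horizon, and $N$ to be the exterior of $\Sigma$, so that hypothesis (ii) is exactly the given outer-minimizing assumption on $\Sigma$. (In the H-isometric case the mean curvature condition (iii) is an equality, so it is insensitive to the swap.)

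For the final step you identify as the ``main obstacle,'' the paper takes a more direct route than your region-isometry bookkeeping. Once Theorem \ref{thm-main} (via Remark \ref{rem-main}) shows the glued metric is smooth across the corner, the ambient curvature of $g_m$ must match at $p$ and $F(p)$ for every $p\in\Sigma$. Writing $g_m$ in rotationally symmetric form, one computes $|\mathrm{Ric}(g_m)|^2 = n(n-1)(n-2)^2 m^2 r^{-2n}$, a strictly monotone function of $r$ alone; hence $F$ preserves $r$, and an intrinsic isometry of hypersurfaces in $(\mathbb{M}_m,g_m)$ that preserves $r$ is the restriction of a rotation. This bypasses entirely the need to extend isometries of Schwarzschild subregions to global rigid motions.
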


The rigidity of H-isometric hypersurfaces seems an interesting question that 
links the isometric embedding problem in Riemannian geometry to the context of 
quasi-local mass in general relativity. In the Euclidean space $\mathbb{R}^3$, a classic result of Cohn-Vossen \cite{CV} shows 
that convex surfaces are rigid. Recently Li and Wang \cite{LW} gave counterexamples which illustrate the lack 
of rigidity even for convex surfaces if the ambient manifold is not a space form.
On the other hand, relativistic consideration in relation to the Bartnik mass \cite{Bartnik}
seems to suggest H-isometric surfaces can be rigid if the ambient manifold is static.
In a spatial Schwarzschild manifold $(\mathbb{M}_m, g_m)$, 
such a rigidity was shown by Chen and Zhang \cite{CZ} for suitable convex surfaces in dimension $3$. 
Analogous results were given by Li, Wang and the second author \cite{LMW18} among starshaped hypersurfaces in 
an $(\mathbb{M}_m, g_m)$ of general dimensions.

\medskip

We now explain the proof of Theorem \ref{thm-main}. 
A main tool we use is Bray's proof of the Riemannian Penrose inequality (RPI)
in dimension three \cite{B} and Bray-Lee's proof of the RPI for dimensions $ n \le 7$ \cite{BL}.
We use the flow produced in \cite{B, BL} to perturb the singular metric $g$ constructed in Remark \ref{rem-main}, and 
analyze the case that the mass has a zero derivative.  This  relates to another ingredient in the proof,
which is a revisit of the rigidity case of the Riemannian positive mass theorem with corners along a hypersurface. 

\begin{theo} \label{thm-main-2}
Let $ (\Omega^n, g_{_\Omega})$, $ (N^n, g_{_N})$  be a compact manifold, an asymptotically flat manifold,
with nonnegative scalar curvature, with boundary $\p \Omega$, $ \p N$,  respectively.
Suppose $\p \Omega$ is isometric to $ \p N$, and under the isometry, the mean curvatures satisfy 
$H_{_\Omega} \ge H_{_N}$. 
If $\m (g_{_N}) = 0 $, then 
\begin{itemize}
\item $\p \Omega$ and $ \p N$ have the same second fundamental forms; and
\item the manifold $(M, g)$, constructed by gluing $(\Omega, g_{_\Omega})$ and $(N,  g_{_N})$ 
in Remark \ref{rem-main}, is smooth and is isometric to $ (\mathbb{R}^n, g_0)$.
\end{itemize}
\end{theo}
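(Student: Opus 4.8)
The plan is to reduce this statement to a known rigidity result for the positive mass theorem with corners, using a doubling-and-smoothing argument combined with the rigidity case of the ordinary Riemannian positive mass theorem. First I would form the glued manifold $(M, g)$ as in Remark \ref{rem-main}, so that $g$ is a Lipschitz metric with nonnegative scalar curvature away from $\Sigma = \p\Omega = \p N$, and along $\Sigma$ the mean curvature jumps in the ``good'' direction, $H_{_\Omega} \ge H_{_N}$. The key analytic input is that such a metric carries a well-defined notion of nonnegative distributional scalar curvature; this is the content of the work on manifolds with corners (Miao, Shi--Tam), and the sign condition $H_{_\Omega}\ge H_{_N}$ is exactly what makes the corner contribution nonnegative. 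Since $\m(g_{_N}) = 0$ and $M$ has no other ends, the mass of $(M,g)$ is zero.

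Next I would invoke the rigidity statement that accompanies the positive mass theorem with corners: a complete asymptotically flat manifold with a single corner hypersurface, nonnegative scalar curvature in the distributional sense, and zero mass must in fact be smooth across the corner, hence isometric to $(\mathbb{R}^n, g_0)$ by the rigidity case of the usual positive mass theorem. The mechanism is a conformal-deformation argument: one solves for a conformal factor that kills the distributional scalar curvature, applies the rigidity of the smooth positive mass theorem to conclude the deformed metric is flat, and then shows the conformal factor was identically $1$ because the mass was already zero and deformations of this type strictly decrease mass unless trivial. Once $g$ is smooth across $\Sigma$, the jump in mean curvature across $\Sigma$ must vanish, which gives $H_{_\Omega} = H_{_N}$; and smoothness of the metric through $\Sigma$ together with the common induced metric forces the second fundamental forms of $\Sigma$ in $(\Omega, g_{_\Omega})$ and of $\p N$ in $(N, g_{_N})$ to agree (the second fundamental form is $\frac12 \p_t g_t$ in the Gaussian coordinates, and smoothness of $g$ means $\p_t g_t^-|_{t=0} = \p_t g_t^+|_{t=0}$). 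Flatness of $(M, g)$ then restricts to give $(\Omega, g_{_\Omega}) \cong$ a compact region in $\mathbb{R}^n$ and $(N, g_{_N}) \cong$ its complement.

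The main obstacle I expect is the rigidity step itself: upgrading ``zero mass'' to ``smooth across the corner.'' The positive mass theorem with corners gives the inequality $\m(g) \ge 0$, but extracting rigidity requires a strict monotonicity: one must show that if the metric genuinely has a corner (i.e. the second fundamental forms differ), then a suitable deformation strictly lowers the mass, contradicting $\m(g)=0$. Carrying this out rigorously means controlling the conformal Laplacian on a manifold with only Lipschitz regularity, establishing existence and decay of the solution to the relevant linear elliptic equation with a distributional (measure) source supported on $\Sigma$, and tracking the ADM mass through the conformal change with enough precision to see the strict decrease. A clean way to organize this is to localize the deformation near $\Sigma$, smooth the metric there at a small scale $\delta$ while keeping scalar curvature nonnegative up to an error that vanishes as $\delta \to 0$, and pass to the limit; one must ensure the corner term does not disappear in the limit unless the second fundamental forms already match. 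This is essentially the argument in the literature on scalar curvature rigidity with corners, and the contribution here is to package it in the form needed as an ingredient for Theorem \ref{thm-main}.
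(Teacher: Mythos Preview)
Your high-level plan matches the paper's: form the glued Lipschitz manifold $(M,g)$ as in Remark~\ref{rem-main} and apply rigidity of the positive mass theorem with corners (this is Proposition~\ref{prop-pmt-rigidity}, from which Theorem~\ref{thm-main-2} follows in one line). The gap is in the mechanism you propose for that rigidity step. The conformal/smoothing route you outline is Miao's argument \cite{Miao02} for the \emph{inequality} $\m(g)\ge 0$, but it does not close the equality case: after mollifying at scale $\delta$ and conformally deforming to zero scalar curvature one has $\m(\tilde g_\delta)\ge 0$ and $\m(\tilde g_\delta)\to 0$, but not $\m(\tilde g_\delta)=0$ for any fixed $\delta$, so smooth PMT rigidity does not apply to $\tilde g_\delta$. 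Working distributionally on $g$ itself, the conformally deformed metric is still only Lipschitz across $\Sigma$. Either way the most this delivers is $H_{_\Omega}=H_{_N}$ and flatness away from $\Sigma$; there is no mechanism here to force $h_+=h_-$, and your sentence ``smoothness of $g$ through $\Sigma$ forces the second fundamental forms to agree'' assumes exactly what must be proved.

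The paper instead uses McFeron--Sz\'ekelyhidi \cite{MS}: the $h$-flow from the Lipschitz initial data $g$ produces smooth metrics $g(t)$ with nonnegative scalar curvature and the same mass; when $\m(g)=0$ each $(M,g(t))$ is isometric to $(\mathbb{R}^n,g_0)$, the flow acts by diffeomorphisms $\phi_t$ bounded in $C^{1,1}$, and a subsequence converges to a $C^{1,1}$ isometry $\phi_0:(M,g)\to(\mathbb{R}^n,g_0)$. Then $\phi_0(\Sigma)\subset\mathbb{R}^n$ is a $C^{1,1}$ hypersurface with an a.e.\ defined second fundamental form $h_0$, and pulling back from either side gives $h_+=\phi_0^*h_0=h_-$ a.e., hence everywhere by continuity. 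This geometric comparison via the $C^{1,1}$ isometry is the missing idea; once $h_+=h_-$, smoothness of $g$ across $\Sigma$ follows from flatness on both sides (cf.\ \cite[Lemma~4.1]{ST}), and the global isometry to $(\mathbb{R}^n,g_0)$ is then the standard smooth rigidity.
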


We give an account of previously known results that relate to Theorems \ref{thm-main} and  \ref{thm-main-2}.
Both rigidity questions are tied to the quasi-local mass problem (see \cite{ST, LM17} for instance).
If the manifolds are spin, Shi and Tam proved Theorem \ref{thm-main-2} in \cite{ST}.
Without the spin assumption, McFeron and Sz\'{e}kelyhidi \cite{MS} proved a variation of Theorem \ref{thm-main-2},
from which Theorem \ref{thm-main-2} is derived. 
We will explain how the results in \cite{MS} implies Theorem \ref{thm-main-2} in Proposition \ref{prop-refined-MS}.
In the case of Riemannian Penrose inequality with corners, 
the rigidity part was studied by Shi, Wang and Yu \cite{SWY18} in $3$-dimension, and the manifolds were shown to be static with 
zero scalar curvature and, under an additional geometric condition, 
$(\Omega, g_{_\Omega})$ was proven to be isometric to a region in $(\mathbb{M}_m, g_m)$.
Theorem \ref{thm-main} was also proved by the authors \cite{LM18} for the case $ n =3$, 
in the setting of the localized Penrose inequality \cite{LM17}.

The Riemannian Penrose inequality was first proved by Huisken and Ilmanen \cite{HI01} for connected horizon, 
and by  Bray \cite{B} for general horizon, both in dimension $3$. 
In \cite{BL}, Bray and Lee established the inequality for dimension $ n \le 7 $. 
As the proof of Theorem \ref{thm-main} makes use of Bray and Lee's work  \cite{BL}, Theorem \ref{thm-main}
satisfies the same dimension assumption. 

\section{Rigidity of PMT with corners along a hypersurface} \label{sec-rigidity}

In this section, we revisit the rigidity case of the Riemannian positive mass theorem \cite{SY79, W81}
formulated on manifolds with corners along a hypersurface.

We say $(M^n, g)$ is an asymptotically flat manifold with corners along a hypersurface $S$ if the following conditions hold:
\begin{enumerate}
\item $M$ is a smooth differentiable manifold and $S \subset M$ is a compact, 
embedded two-sided hypersurface which can be disconnected;

\item $g$ is a $C^0$ metric on $M$, $g$ is smooth away from $S$, and $ (M \setminus K, g)$ is asymptotically flat 
for some compact set $K$ containing $S$; 

\item there exists a smooth open neighborhood $U$ of $S $ such that $U$ is diffeomorphic to 
$ S \times (- \epsilon, \epsilon) $ on which the metric $g$ takes the form of 
$g = d t^2 + g_t $. Here $ S =  S \times \{ 0 \}$ under this diffeomorphism, and $g_t$ 
denotes the induced metric on $ S \times \{ t \}$. Moreover, if $U_+$ denotes 
$ S \times [0 , \epsilon)$ and $ U_-$ denotes $ S \times ( - \epsilon, 0]$ in $U$, 
$g$ is smooth up to the boundary $ S $ in $U_+$ and $U_-$, respectively. 
\end{enumerate}

We emphasize that all future regularity assertions of $g$ we are about to make will be with respect to 
the differential structure specified on $U$ above. 
In geometric applications, this will not impose any restriction, because one can always 
glue two Riemannian manifolds along 
their isometric boundary $S$ in the way specified in Remark \ref{rem-main}.

Given such an $(M, g)$ with corners along $S$, we say it satisfies the mean curvature condition across $S$ 
if $ H_-  \ge H_+ $ (see \cite{Miao02}). 
Here $H_+$, $H_-$ denote the mean curvature of $ \Sigma$ in $(U_+, g)$, $ (U_-, g)$ with respect to the 
normal vectors $\p_t$, respectively. 
We note that this condition is intrinsic and it remains unchanged if one switches $t$ and $-t$.

A main observation in this section is the following rigidity statement, which is built on the result of 
McFeron and Sz\'{e}kelyhidi \cite{MS}.

\begin{prop} \label{prop-pmt-rigidity} \label{prop-refined-MS}
Let $(M^n, g)$ be an asymptotically flat manifold with corners along a hypersurface $S$.
Suppose $g$ has nonnegative scalar curvature away from $S$ and satisfies the mean curvature condition across $S$. 
If $ \m (g) = 0 $, then $S$ has the same second fundamental forms in its both sides in $M$, 
the metric $g$ is smooth, and $(M^n, g)$ is isometric to the Euclidean space $ (\mathbb{R}^n, g_0)$.
\end{prop}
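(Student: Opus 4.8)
The plan is to reduce Proposition~\ref{prop-refined-MS} to the results of McFeron and Sz\'{e}kelyhidi~\cite{MS} by a Ricci flow smoothing argument, and then to upgrade their conclusion (which a priori only gives that the smoothed manifold is isometric to $(\mathbb{R}^n, g_0)$, i.e.\ mass zero plus a limiting statement) to the precise assertion that $g$ itself is smooth across $S$, that the second fundamental forms of $S$ agree on both sides, and that $(M,g)$ is flat. First I would recall the construction in \cite{MS}: starting from the $C^0$ metric $g$ with corners along $S$ satisfying $H_- \ge H_+$ and nonnegative scalar curvature away from $S$, one runs the Ricci--DeTurck flow $g(s)$ for small $s > 0$; standard parabolic smoothing makes $g(s)$ smooth for $s>0$, the flow converges back to $g$ in $C^0_{\mathrm{loc}}$ as $s \to 0$, and along the flow the ADM mass is continuous and the scalar curvature stays controlled so that $\liminf_{s\to 0}\,\big(\text{suitable mass-type quantity}\big) \ge 0$ in a distributional sense. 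In \cite{MS} the mean curvature jump $H_- - H_+ \ge 0$ along $S$ contributes a nonnegative distributional scalar curvature term $(H_- - H_+)\,d\mu_S$, so the smoothed metrics $g(s)$ have almost-nonnegative scalar curvature, and hence by the positive mass theorem in the smooth category (valid for $n \le 7$, or for all $n$ by Lohkamp/Schoen--Yau) one concludes $\m(g) \ge 0$, with the hypothesis $\m(g) = 0$ forcing the borderline case.

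The key steps, in order, are: (1) set up the Ricci--DeTurck flow smoothing of $g$ exactly as in \cite{MS}, recording that $g$ is already smooth on $M \setminus S$ and of the product form $dt^2 + g_t$ near $S$, so the only singular contribution is concentrated on $S$; (2) observe that the mean curvature condition $H_- \ge H_+$ makes the distributional scalar curvature of $g$ a nonnegative measure, so the approximating smooth metrics $g(s)$ satisfy $R(g(s)) \ge -\eta(s)$ with $\eta(s) \to 0$ in $L^1$; (3) invoke the quantitative/rigidity form of the positive mass theorem together with $\m(g) = 0$ to conclude that the distributional scalar curvature measure must in fact vanish, which in particular forces the corner term $(H_- - H_+)\, d\mu_S \equiv 0$, i.e.\ $H_- = H_+$ pointwise on $S$; (4) with matching mean curvatures, a now-standard removable-singularity / elliptic regularity argument (as in \cite{Miao02, ST}) shows that in a harmonic or Gaussian coordinate chart across $S$ the metric $g$ solves an elliptic system with no distributional defect, hence $g$ is smooth across $S$ and in particular the full second fundamental forms of $S$ from the two sides agree; (5) finally, since $g$ is now a genuine smooth asymptotically flat metric with $R(g) \ge 0$ and $\m(g) = 0$, the equality case of the smooth positive mass theorem gives that $(M, g)$ is isometric to $(\mathbb{R}^n, g_0)$.

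The main obstacle I anticipate is step~(3)--(4): getting from ``the smoothed metrics have almost-nonnegative scalar curvature and mass tending to zero'' to ``the distributional scalar curvature, including the corner measure, is \emph{exactly} zero.'' The results in \cite{MS} are stated so as to yield $\m(g) \ge 0$, and one must check that their proof actually delivers the rigidity statement when $\m(g) = 0$ --- one needs a version of the Ricci flow argument in which the mass being zero propagates backward to kill the defect measure, rather than merely bounding it. This is precisely the point where I would need to ``revisit'' \cite{MS}, as the authors phrase it: either by appealing to a strong-maximum-principle argument for the conformal Laplacian along the flow, or by combining the zero-mass conclusion for $g(s)$ in the limit with the lower semicontinuity of mass under the flow to force each $g(s)$ to be flat and then passing to the limit. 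Once the corner term is shown to vanish, steps~(4) and~(5) are routine: elliptic regularity across $S$ and the rigidity case of the smooth positive mass theorem. I would also need to be slightly careful that the asymptotically flat decay hypotheses in \cite{MS} match those assumed here, but this is a technical matter rather than a genuine difficulty.
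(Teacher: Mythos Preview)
Your step~(4) contains a genuine gap. Suppose you succeed in step~(3) and obtain $H_- = H_+$ on $S$, together with the fact (which does follow from \cite{MS}) that $g$ is flat on each side of $S$. You then claim that ``a now-standard removable-singularity / elliptic regularity argument (as in \cite{Miao02, ST})'' shows $g$ is smooth across $S$ and that the full second fundamental forms agree. This does not follow. Vanishing of the distributional scalar curvature is a single scalar condition, not an elliptic system for $g$; matching mean curvatures across $S$ does not by itself force matching second fundamental forms. Concretely, a Lipschitz metric $dt^2 + g_t$ that is flat for $t<0$ and for $t>0$, with $g_0^+ = g_0^-$ and $H_+ = H_-$ but $h_+ \ne h_-$, is exactly the data of two H-isometric but non-congruent hypersurfaces in Euclidean space --- and the rigidity of such hypersurfaces (Corollary~\ref{cor-Eu} here) is a \emph{consequence} of the proposition you are proving, not an input. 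Neither \cite{Miao02} nor \cite{ST} contains the step you invoke: Lemma~4.1 in \cite{ST} assumes the second fundamental forms already match (equivalently, $g$ is $C^1$ across $S$), not merely $H_+ = H_-$.

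The paper circumvents this by a different and sharper use of \cite{MS}. In the equality case, the $h$-flow metrics $g(t)$ are each isometric to $(\mathbb{R}^n, g_0)$, so $g(t) = \phi_t^*(g_0)$ for diffeomorphisms $\phi_t$ which \cite{MS} shows are uniformly bounded in $C^{1,1}$ and subconverge in $C^{1,\alpha}$ to an isometry $\phi_0:(M,g)\to(\mathbb{R}^n,g_0)$. The paper's observation is that the $C^{1,1}$ bound passes to the limit, so $\phi_0$ is itself $C^{1,1}$. Then $S_0 = \phi_0(S)\subset\mathbb{R}^n$ is a $C^{1,1}$ hypersurface with an a.e.\ defined second fundamental form $h_0$, and because $\phi_0$ is an isometry one computes $h_+ = \phi_0^*(h_0) = h_-$ a.e., hence everywhere by continuity. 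This delivers the full equality $h_+ = h_-$ directly, after which Lemma~4.1 of \cite{ST} legitimately gives smoothness of $g$ across $S$. Your proposal misses this regularity-of-the-isometry argument, which is the actual content of the proof.
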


\begin{proof}
Under the given assumptions, it was proved in Theorem 18 of \cite{MS} that there is a $C^{1,\alpha }$ 
diffeomorphism $\phi_0: M^n \to \mathbb{R}^n$ such that $\phi_0$ is an isometry. 
We claim this $\phi_0$ is indeed $C^{1,1}$. 
The reason is as follows. The proof in \cite{MS} considered 
the solution $\{ g(t) \}_{t > 0} $ to the usually called $h$-flow (see \cite{Simon}) with an initial condition $(M, g)$.
The properties of $ g$ ensures $g(t)$, $ t > 0$, has nonnegative scalar curvature.
If $\m ( g) = 0 $, each 
$( M \, g(t) )$ is isometric to $(\mathbb{R}^n, g_0) $ and the $h$-flow 
is  acting by diffeomorphisms. By writing $ g(t) = \phi_t^* (g_0)$, the proof of Theorem 18 on page 439 in \cite{MS} 
showed that the family of diffeomorphisms $\{ \phi_t \}$ are bounded in $C^{1,1}$, 
and has a subsequence that converges in $C^{1,\alpha}$ to a diffeomorphism $\phi_0$. 
Thought not stated in \cite{MS}, the $C^{1,1}$ bound on $ \{ \phi_t \}$ ensures that the limit $\phi_0$ is $C^{1,1}$ itself. 

Now let $  h_+$, $ h_-$ denote the second fundamental forms of $ S $ in $(U_+, g)$, $(U_-, g)$, respectively.
Let $ S_0 = \phi_0 ( S) \subset \mathbb{R}^n$, then $ S_0$ is a $C^{1,1}$ hypersurface, and hence 
has a.e. defined fundamental form.
Denote this second fundamental form by $ h_0$. 
Let $ \{ \partial_\alpha \}$ be a local frame on $ S $. Let $ \partial^{(0)}_\alpha  = \phi_{0 *} ( \partial_\alpha)$, then
$\{ \partial^{(0)}_\alpha \}$ is a local $C^{0,1}$ frame on $ S_0$. The Christoffel symbols of the induced metric on 
$S_0$ with respect to $\{ \partial^{(0)}_\alpha \}$, wherever they are defined, agree with those of the induced metric 
on $\Sigma$ with respect to $ \{ \partial_\alpha \}$ under the map $\phi_0$, because $ \phi_0 : S \rightarrow S_0$ is 
an isometry.  Also, $ \phi_0$ sends the normal vector to $ S $ to the normal vector to $ S_0$.
Thus, by the definition of second fundamental forms,  $  h_+ = \phi_0^* (h_0)$, a.e. on $ S $.
Similarly, $  h_- = \phi_0^* (h_0)$, a.e. on $ S$. These imply $ h_+ =  h_-$. 

By the proof of Theorem 2 in \cite{MS}, $ g$ is flat away from $S$. This and the fact $S$ has the same second fundamental 
forms in $U_+$ and $U_-$  imply that $g$ is smooth across $S$ in $U$. (See Lemma 4.1 in \cite{ST} for instance.)
Hence, $g$ is smooth on $M$ and $(M, g)$ is isometric to $(\mathbb{R}^n, g_0)$.
\end{proof}

Theorem \ref{thm-main-2} follows from Proposition \ref{prop-pmt-rigidity} and the construction of $(M, g)$ in Remark \ref{rem-main}.
Proposition \ref{prop-refined-MS} also implies the rigidity of H-isometric hypersurfaces in Euclidean spaces. 

\begin{coro} \label{cor-Eu}
Let $ \Sigma$ and $ \tilde \Sigma$ be two closed hypersurfaces in $\mathbb{R}^n$.
If $ \Sigma$ and $\tilde \Sigma$ are H-isometric, then they differ by a rigid motion of $\mathbb{R}^n$.
\end{coro}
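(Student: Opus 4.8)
The plan is to reduce this to the corners version of the rigidity of the positive mass theorem, namely Proposition \ref{prop-refined-MS}. First I would handle the topology: since $\Sigma$ and $\tilde\Sigma$ are closed embedded hypersurfaces in $\mathbb{R}^n$, each bounds a compact region. Let $\Omega$ be the closed bounded region with $\p\Omega = \Sigma$ (with its flat metric), and let $N$ be the \emph{unbounded} closed region with $\p N = \tilde\Sigma$ (again with the flat metric); then $(N, g_0)$ is asymptotically flat with $\m(g_0) = 0$. Using the isometry $F : \Sigma \to \tilde\Sigma$ furnished by the hypothesis, glue $\Omega$ to $N$ along $\Sigma \simeq \tilde\Sigma$ as in Remark \ref{rem-main}, obtaining a $C^0$ metric $g$ on a smooth manifold $M$ that is asymptotically flat with corners along $S := \Sigma$, has vanishing (hence nonnegative) scalar curvature away from $S$, and has $\m(g) = 0$.

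The one genuine point to check is the orientation of the mean curvature inequality. The H-isometric hypothesis gives $\tilde H(F(p)) = H(p)$ for all $p$, i.e. the mean curvatures agree exactly, where $H$ is computed with respect to the outward normal of $\Omega$ (pointing out of the bounded region) and $\tilde H$ with respect to the inward normal of $N$ along $\tilde\Sigma$ (pointing into the bounded region on the $\tilde\Sigma$ side). After the gluing, on the $\Omega$ side the normal $\p_t$ points toward $S$ from inside $\Omega$ and on the $N$ side $\p_t$ continues in the same direction, pointing away from $S$ into $N$; with these conventions $H_- = H$ and $H_+ = \tilde H$ (up to the standard sign bookkeeping, using that the mean curvature condition in Section \ref{sec-rigidity} is intrinsic and symmetric under $t \leftrightarrow -t$). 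In particular $H_- = H_+$, so the mean curvature condition $H_- \ge H_+$ across $S$ holds — indeed with equality. This sign-tracking is the main obstacle: one must be careful that ``H-isometric'' really produces a configuration compatible with the $H_- \ge H_+$ convention rather than its reverse, but because equality holds there is no ambiguity in which side the inequality must point.

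Now apply Proposition \ref{prop-refined-MS} to $(M, g)$: since $\m(g) = 0$, the conclusion is that $S$ has the same second fundamental form from both sides, the metric $g$ is smooth across $S$, and $(M, g)$ is isometric to $(\mathbb{R}^n, g_0)$. Let $\Phi : (M, g) \to (\mathbb{R}^n, g_0)$ be such an isometry. Then $\Phi$ restricts to an isometry from the original copy of $(N, g_0)$ onto a subset of $\mathbb{R}^n$ that is the exterior region of the embedded hypersurface $\Phi(\tilde\Sigma)$; since an isometry of an open subset of $\mathbb{R}^n$ into $\mathbb{R}^n$ extends to a rigid motion of $\mathbb{R}^n$, we conclude $\Phi|_N$ agrees with a rigid motion $A$ of $\mathbb{R}^n$, so $\tilde\Sigma = A^{-1}(\Phi(\tilde\Sigma))$. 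Arguing symmetrically with $\Phi|_\Omega$, which is also the restriction of a rigid motion $B$, and using that $\Phi$ matches up $\Sigma$ with $\tilde\Sigma$ via $F$ and has the same image hypersurface $\Phi(S)$ from both sides, we get $A^{-1}\circ B$ carries $\Sigma$ to $\tilde\Sigma$. Hence $\Sigma$ and $\tilde\Sigma$ differ by the rigid motion $A^{-1}\circ B$ of $\mathbb{R}^n$, which is the claim.
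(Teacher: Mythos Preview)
Your proof is correct and follows exactly the route the paper intends: the paper states the corollary immediately after Proposition~\ref{prop-refined-MS} with only the remark that the proposition implies it, and your argument spells out precisely that implication --- glue the bounded region on one side to the exterior region on the other via the given isometry as in Remark~\ref{rem-main}, observe that the mean curvature condition across $S$ holds with equality so Proposition~\ref{prop-refined-MS} applies with $\m(g)=0$, and then read off the rigid motion from the resulting global isometry to $(\mathbb{R}^n,g_0)$. Your final paragraph in fact shows a bit more than stated: the rigid motion $A^{-1}\circ B$ restricts to the given isometry $F$ on $\Sigma$.
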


It is worth of noting that there are no topological assumptions on $\Sigma$, $\tilde \Sigma$ above.
In the classic study of isometric surfaces in $\mathbb{R}^3$, results are often restricted to $2$-spheres
due to various convexity assumptions on the surface.

Next, we examine the rigidity case of Bray's mass-capacity inequality, Theorem 9 in \cite{B}, for manifolds with corners along a hypersurface.

Given an asymptotically flat manifold $(M^n, g)$ with corners along a hypersurface $\Sigma$, 
with nonempty boundary $\p M $, as the metric is Lipschitz, there exists a function $\varphi $ satisfying 
\begin{equation}
\left\{ 
\begin{split}
\Delta_g \varphi(x) =  & \ 0,  \ \mathrm{in} \ M\\
\varphi(x) = & \ 0 ,   \ \mathrm{on}  \ \p M \\
\varphi(x) \to  & \ 1, \ \mathrm{as} \ x \to \infty . 
\end{split}
\right.
\end{equation}
Standard elliptic theory shows 
$\varphi \in W^{2,p}_{loc} (M)$ for any $ p > n $, hence $ \varphi \in C^{1,\alpha}_{loc} (M)$ for any $\alpha \in (0,1)$, 
$ \varphi $ is smooth away from $\Sigma$, and $ \varphi $ is smooth up to $\p M$.
The asymptotically flatness of $g$ implies $\varphi$ satisfies 
\be
\varphi (x) = 1 - \frac{ \mathcal{E} (g) }{2  |x|^{2-n} } + o (|x|^{2-n}), \ \text{as} \ x \to \infty. 
\ee
Here $\E (g) > 0$ is a constant known as the capacity of $\p M$ in $(M, g)$. 

\begin{prop} \label{prop-mass-capacity}
Let $(M^n, g)$ be an asymptotically flat manifold with corners along a hypersurface $\Sigma$.
Suppose $g$ has nonnegative scalar curvature away from $\Sigma$ and satisfies the mean curvature condition across $\Sigma$. 
If $ M $ has nonempty boundary $\Sh$ that has zero mean curvature,
then  $$ \m (g) \ge \frac12  \E(g) . $$ 
Moreover, if  $\m (g) = \frac12  \E(g)$, then $ \Sigma$ has the same second fundamental forms in its two sides in $(M, g)$, 
$g$ is smooth across $\Sigma$, and $(M, g)$ is isometric to a spatial Schwarzschild manifold outside the horizon. 
\end{prop}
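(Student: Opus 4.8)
\emph{The inequality.} The plan is to obtain $\m(g)\ge\tfrac12\E(g)$ from Bray's mass--capacity inequality \cite[Theorem 9]{B} by approximation. Since $g$ is Lipschitz and satisfies $H_-\ge H_+$ across $\Sigma$, its distributional scalar curvature is a nonnegative measure, so $g$ can be approximated by smooth metrics $g_\delta$ agreeing with $g$ outside an arbitrarily small neighborhood of $\Sigma$ — so that $\Sh=\p M$ stays minimal and the asymptotics are unchanged — with $R_{g_\delta}\ge 0$, and with $g_\delta\to g$ in $C^0$ and locally smoothly off $\Sigma$. (When $H_->H_+$ this is Miao's mollification \cite{Miao02}; the borderline case is reached by first forcing a strict inequality with a small compactly supported conformal perturbation and then restoring $R\ge 0$ by a further small conformal change, or by running Simon's $h$-flow as in \cite{MS}.) Bray's Theorem~9 applied to $(M,g_\delta)$ gives $\m(g_\delta)\ge\tfrac12\E(g_\delta)$; as $\delta\to 0$ the ADM mass converges and the capacity potentials $\varphi_\delta$ converge (so $\E(g_\delta)\to\E(g)$), whence the claim. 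Alternatively one checks that the proof of \cite[Theorem 9]{B} applies directly to $g$: the corner contributes a nonnegative distributional term, and the capacity potential $\varphi$ is $C^{1,\alpha}$, hence $C^1$ across $\Sigma$, so the conformal deformation $\varphi^{4/(n-2)}g$ used by Bray is admissible across $\Sigma$ (its mean curvature jump there is $\varphi^{-2/(n-2)}$ times that of $g$, so the mean curvature condition persists).

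\emph{The rigidity.} Assume $\m(g)=\tfrac12\E(g)$. Running Bray's argument for $g$ (or for the $g_\delta$, whose mass--capacity defect tends to $0$) and tracing the equality case, I expect to extract: (a) the distributional scalar curvature of $g$ vanishes — in particular $H_-=H_+$ along $\Sigma$ and $g$ is scalar flat on each side; and (b) $g$ is conformally flat on each side of $\Sigma$, so in suitable global coordinates $g=u^{4/(n-2)}g_0$ for a positive function $u$. Given (a) and (b), $R_g=0$ distributionally forces $\Delta_{g_0}u=0$ distributionally, so by Weyl's lemma $u$ is a smooth harmonic function; hence $g$ is smooth across $\Sigma$ and $\Sigma$ has the same second fundamental form from its two sides. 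Finally the asymptotic flatness of $g$ and the fact that $\Sh$ is $g$-minimal pin $u$ down to $1+\tfrac m2|x|^{2-n}$ after a translation, with $m=\m(g)$, so $(M,g)$ is isometric to the Schwarzschild exterior $(\mathbb{M}_m,g_m)$. (The implication that $H_-=H_+$, equality of second fundamental forms, and scalar flatness on each side yield smoothness across $\Sigma$ is the gluing argument of \cite{ST}, as in the regularity step of Proposition~\ref{prop-refined-MS}; here it is automatic once $u$ is seen to be harmonic.)

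\emph{Main obstacle.} The genuine difficulty is the equality analysis across the corner. One must check that each ingredient of the proof of \cite[Theorem 9]{B} — the conformal change by $\varphi$, the monotonicity along Bray's conformal flow, and the appeal to the positive mass theorem — tolerates the Lipschitz corner along $\Sigma$, where it contributes only a nonnegative distributional term, and that in the equality case the resulting structure is rigid enough to give the global conformally flat form $g=u^{4/(n-2)}g_0$ with $u$ harmonic, not merely a metric with the same mass and capacity. A clean way around the last point is to prove and use a quantitative stability version of Bray's rigidity and feed in the approximating sequence $g_\delta$; this stability statement is the main input not already contained in Section~\ref{sec-rigidity}.
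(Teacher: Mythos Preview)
Your inequality argument is fine: either the approximation route or the direct application of Bray's reflection argument to the Lipschitz metric works, since the mean-curvature condition across $\Sigma$ is preserved under the conformal change by the $C^{1,\alpha}$ potential $\varphi$, as you note.

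The rigidity argument, however, has a genuine gap that you yourself flag. Your first option (approximation plus a quantitative stability version of Bray's Theorem~9) requires an ingredient you do not possess, and your second option (``tracing the equality case'') is not a proof: the assertion that one can ``extract'' a global expression $g=u^{4/(n-2)}g_0$ with a single $u$ is exactly the hard step, and no mechanism is offered. Note also that Bray's Theorem~9 is not proved via the conformal flow; it is a one-shot reflection-plus-compactification argument followed by the positive mass theorem, so ``monotonicity along Bray's conformal flow'' is a red herring here.

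The paper avoids both stability and hand-waving by carrying out Bray's construction explicitly on the singular metric: reflect $(M,g)$ across the minimal boundary $\Sh$, take the odd extension $\tilde\varphi$ of $\varphi$, and set $\tilde g=\bigl((1+\tilde\varphi)/2\bigr)^{4/(n-2)}g$. The reflected end compactifies to a point $o$ (a $W^{1,q}$ singularity, removable for PMT purposes), and one is left with an asymptotically flat manifold with corners along $S=\Sigma\cup\Sh\cup\Sigma'$, nonnegative scalar curvature, and mass $\m(g)-\tfrac12\E(g)$. Now the rigidity of the positive mass theorem with corners --- precisely Proposition~\ref{prop-refined-MS}, already established in Section~\ref{sec-rigidity} --- applies directly: if this mass vanishes, $(\widetilde M\cup\{o\},\tilde g)$ is smooth and isometric to $(\mathbb{R}^n,g_0)$. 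Since $\varphi$ is $C^1$ across $\Sigma$, the conformal second-fundamental-form formula transfers equality of second fundamental forms back to $(M,g)$; a regularity bootstrap then makes $g$ smooth, and finally $2/(1+\tilde\varphi)$ is a positive harmonic function on $\mathbb{R}^n\setminus\{0\}$, forcing it to be $1+\tfrac{m}{2}|x|^{2-n}$. Thus the missing input is not a new stability theorem but simply Proposition~\ref{prop-refined-MS}, applied to the conformally compactified double.
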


\begin{proof}
We start with a property of harmonic functions at and near the singular hypersurface $ \Sigma$.
Since $g$ is smooth up to $\Sigma$ from its both sides in $M$, the restriction of 
$\varphi$ to $ \Sigma$ is indeed smooth, and $\varphi $ is smooth up to $\Sigma$ from its both sides in $M$. 
See Proposition 3.1 and Remark 3.1 in \cite{HMT} for this claim.

We proceed by conformally deforming $g$ as in \cite{B}.  More precisely, 
we first reflect $(M, g)$ across $\Sh$ and denote the resulting manifold by $(\widetilde{M},  g)$.
Clearly, $(\widetilde M, g)$ is an asymptotically flat manifold with two ends, with corners along a hypersurface
$S = \Sigma \cup \Sh \cup \Sigma' $,
where $ \Sigma'$ is the image of $ \Sigma$ under the reflection map. 
Let $\tilde{\varphi}$ be the odd extension of $\varphi$ to $\widetilde{M}$, and  let 
\be \label{eq-Bray-cmpt}
\tilde{g}= \left(\frac{1+\tilde{\varphi}}{2} \right)^{\frac{4}{n-2}}g  \ \ \mathrm{on} \ \widetilde M . 
\ee 
At the infinity of $M$, $\m (\tilde g)$ is related to $\m (g)$ by $ \m (\tilde g) = \m - \frac12 \E (g)$.
Moreover, $(\widetilde M , \tilde g)  $ satisfies the following: 

\begin{itemize}
\item[(i)] $ \tilde g$ is smooth up to $S $ in its both sides in $\widetilde M$. Here we used the above-mentioned 
regularity of $\varphi$ at and near $\Sigma$.

\item[(ii)] If $ M_-$ denotes the image of $M$ under the reflection in $\widetilde M$
and $\Omega_- = M_- \cup \{ o \} $ denotes the one-point compactification of $M_-$ 
by including a point $o$ representing the infinity of $M_-$, then $ \tilde g$ is $ W^{1, q}_{loc} $ 
near $o$ for some $ q > n$.
This is a result of the harmonic conformal factor $ \displaystyle \frac{ 1 + \varphi}{2} \to 0 $ as $ x \to  o$. (See 
Lemma 6.1 in \cite{MMT} and Lemma 4.3 in \cite{HM} for instance.)

\item[(iii)] On $\widetilde M \cup \{o\}$, $\tilde g$ has nonnegative scalar curvature away from $ S \cup \{ o \}$,
and satisfies the mean curvature condition across $S$. 
\end{itemize}

The particular type of point singularity at $o$ does not affect the positive mass theorem.
Applying the proof of Theorem 18 in \cite{MS}, combined with the proof of 
Theorem 7.2  in \cite{ShiTam16}, one has 
$ \m (\tilde g ) \ge 0 $, and 
if $ \m (\tilde g) = 0 $, $ \widetilde M \cup \{ o \}$ is diffeomorphic to $ \mathbb{R}^n$ and 
$\tilde g$ is flat away from $ S \cup \{ o \}$.
(If $n=3$, it was shown in \cite{LiMantoulidis18} that a much weaker type of point singularity suffices.)

Suppose $ \m (\tilde g) = 0 $.
Since $\tilde g$ is flat around $o$, we can assume $\tilde g$ is smooth across $o$ by 
revising the differential structure near $o$ if needed. Precisely, this follows from Theorem 3.1 in \cite{SmithYang92}.
Now $(\widetilde M \cup \{ o \} ,  \tilde g)$ is only potentially singular at $S$ and $\m (\tilde g ) = 0 $. 
By Proposition \ref{prop-pmt-rigidity}, 
$S$ has the same fundamental forms in its two sides in $(\widetilde M, \tilde g)$, 
$\tilde g$ is smooth (with respect to the differential structure specified by the Gaussian 
neighborhoods of $S$ relative to  $\tilde g$),
and $(\widetilde M \cup \{ o \}, \tilde g)$ is isometric to $ (\mathbb{R}^n, g_0)$.

Back on $(M, g)$, as $ \varphi $ is $C^1$ across $ \Sigma$, it follows from 
the conformal change formula of second fundamental forms
that $\Sigma$ has the same fundamental forms in its two sides in $(M, g)$.
Hence, $ g$ is $C^{1,1}$ across $\Sigma$, and $\varphi $ is $C^{2,\alpha}$ in $U$.
Since $ \tilde g  $ has zero curvature in $U_+$ and $U_-$, and $ \varphi $ is $C^{2}$ in $U$, 
a calculation similar to Lemma 4.1 in \cite{ST} shows $\p_t^2 g_{t} |_{t=0}  $ in $U_+$ and $U_-$ agree at $ \Sigma$.
As a result, $g$ is $C^{2,1}$ across $\Sigma$ and $\varphi $ is $C^{3,\alpha}$ in $U$.
Repeating this argument, we have $g$ and $\varphi$ are smooth in $U$.
The same argument also shows $ \Sh$ is totally geodesic in $(M, g)$, 
$g$ and $\tilde \varphi$ are smooth near $\Sh $ in $\widetilde M$.

To complete the proof, by \eqref{eq-Bray-cmpt}, $ \tilde \psi = \frac{2}{ 1 + \tilde \varphi} > 0 $ is harmonic on 
$(\widetilde M, \tilde g)$ that is isometric to $(\mathbb{R}^n \setminus \{0\}, g_0)$. 
Therefore, $\frac{2}{ 1 + \tilde \varphi } = 1 + \frac{m}{ 2 |x|^{n-2} } $
for some constant $m >0$.
We conclude that $(M, g)$ is isometric to a spatial Schwarzschild manifold outside its horizon. 
\end{proof}

\section{Proof of Theorem \ref{thm-main} and Corollary \ref{cor-iso-hypersurface}} \label{sec-pf}

We use Proposition \ref{prop-mass-capacity} and the conformal flow in \cite{B, BL} to prove the following theorem, which 
implies Theorem \ref{thm-main}.
We emphasize that the conformal flow in \cite{B, BL} is used only to produce a perturbation of the metric, 
thus we do not require the long term behavior of the flow if the initial data is a manifold with corners.

\begin{theo} \label{thm-rigidity}
Let $(M^n, g)$ be an asymptotically flat manifold with corners along a hypersurface $\Sigma$.
Suppose $g$ has nonnegative scalar curvature away from $\Sigma$ and satisfies the mean curvature condition across $\Sigma$. 
Suppose $ M $ has nonempty  boundary $\Sh$ that satisfies 
\begin{itemize}
\item $\Sh$ has zero mean curvature;
\item $\Sh $ is strictly outer-minimizing; and
\item $  \m (g) = \frac12 \left( \frac{ |\Sh|}{  \omega_{n-1} } \right)^\frac{n-2}{n-1} $.
\end{itemize}
Then $ \Sigma$ has the same second fundamental forms in its two sides in $(M, g)$, 
$g$ is smooth across $\Sigma$, and $(M, g)$ is isometric to a spatial Schwarzschild manifold outside the horizon. 
\end{theo}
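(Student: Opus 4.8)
The plan is to use Bray's conformal flow from \cite{B, BL} not as a tool to run to infinity, but purely as a device to produce a one-parameter family of comparison metrics whose ADM mass, as a function of the flow parameter, has a critical point at $t=0$. Concretely, I would start with the given $(M^n, g)$ with corners along $\Sigma$ and horizon boundary $\Sh$, and consider the conformal flow $g(t) = u_t^{4/(n-2)} g$ of Bray--Lee. The key input from \cite{B, BL} is the monotonicity formula: the quantity
\be
\mathcal{A}(t) := \m\bigl(g(t)\bigr) - \frac12 \left( \frac{ |\Sh(t)|}{ \omega_{n-1} } \right)^{\frac{n-2}{n-1}}
\ee
is monotone nonincreasing along the flow (this is exactly the content that yields the Penrose inequality in the limit), where $\Sh(t)$ is the (outermost minimal area enclosure / horizon) in $g(t)$. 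Because $\Sh$ is strictly outer-minimizing at $t=0$, one checks that $|\Sh(t)|$ is differentiable at $t=0$ with a controlled derivative, so $\mathcal{A}(t)$ is a well-defined function that is nonincreasing; combined with $\mathcal{A}(0) = 0$ from the equality hypothesis, one gets $\mathcal{A}(t) \le 0$ for small $t > 0$.

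Next I would relate $\mathcal{A}(t) \le 0 = \mathcal{A}(0)$ to the mass-capacity inequality of Proposition \ref{prop-mass-capacity}. The point is that the capacity term appearing in Proposition \ref{prop-mass-capacity} is precisely what governs the infinitesimal change of mass under the reflection-and-conformal-deformation construction; in Bray's scheme the derivative of $\m(g(t))$ at $t=0$ is $-\E(g)$ (up to a normalizing constant), while the derivative of the area term is pinned by the zero-mean-curvature and outer-minimizing conditions on $\Sh$. Running this comparison, the equality $\mathcal{A}(0)=0$ together with the sign of $\mathcal{A}'(0^+)$ forces $\m(g) = \frac12 \E(g)$ — that is, equality holds in Proposition \ref{prop-mass-capacity}. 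Alternatively, and perhaps more robustly, one can bypass computing derivatives: a scaling/monotonicity argument shows that if $\mathcal{A}(0) = 0$ is the maximum value and $\mathcal{A}$ is nonincreasing, then the static/equality structure must already be present at $t=0$, which is exactly what forces $\m(g) = \frac12\E(g)$. Once we are in the equality case of Proposition \ref{prop-mass-capacity}, that proposition delivers the full conclusion directly: $\Sigma$ has matching second fundamental forms from both sides, $g$ is smooth across $\Sigma$, and $(M,g)$ is isometric to a spatial Schwarzschild manifold outside its horizon.

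I expect the main obstacle to be the low regularity of $g$ at the corner $\Sigma$ interacting with the conformal flow. The flow $g(t) = u_t^{4/(n-2)} g$ is defined via solving an elliptic problem for $u_t$ on the Lipschitz-metric manifold, and one must verify (a) that $u_t$ inherits the same $C^{1,\alpha}$-across-$\Sigma$, smooth-elsewhere regularity as the harmonic function $\varphi$ in the proof of Proposition \ref{prop-mass-capacity} — this should follow from the same elliptic arguments, e.g. Proposition 3.1 and Remark 3.1 in \cite{HMT} — and (b) that the monotonicity formula of \cite{B, BL}, whose original derivation assumes a smooth metric, survives the presence of the corner. For (b) the safest route is to integrate the relevant Bochner/divergence identities over $M \setminus \Sigma$ and check that the boundary terms across $\Sigma$ have the right sign by virtue of the mean curvature condition $H_- \ge H_+$; this is the same mechanism that makes corner versions of the positive mass theorem work, so the corner contributions should only help the inequality. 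A secondary technical point is the behaviour of the minimal-area enclosure $\Sh(t)$ for the perturbed metric near $t=0$: strict outer-minimizing at $t=0$ is an open condition that guarantees $\Sh(t)$ stays close to $\Sh$ and has area close to $|\Sh|$, which is what we need for $\mathcal{A}$ to be continuous (indeed one-sided differentiable) at $t=0$. Handling these two regularity/geometric-measure-theory issues carefully is where the real work lies; the algebraic rigidity conclusion then comes for free from Proposition \ref{prop-mass-capacity}.
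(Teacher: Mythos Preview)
Your overall strategy---run Bray's conformal flow for short time, use the equality hypothesis to force equality in the mass--capacity inequality, then invoke Proposition~\ref{prop-mass-capacity}---is exactly the paper's approach. Two points of execution need correcting, though. First, the formula for the mass derivative is not $-\E(g)$ up to a constant: from \cite{B,BL} one has $\frac{d}{dt}\big|_{t=0}\m(g_t) = \E(g) - 2\m(g)$, and this dependence on $\m(g)$ is precisely what makes the argument close. Second, the clean way to reach $\m(g)=\tfrac12\E(g)$ is not via monotonicity of $\mathcal{A}$ but via the \emph{lower} bound $\mathcal{A}(t)\ge 0$: the area $|\Sh^{(t)}|_{g_t}$ is exactly constant along the flow (Theorem~3 in \cite{B}, Lemma~2.3 in \cite{BL}), and the Riemannian Penrose inequality for manifolds with corners \cite{MM16} applied to $(M_t,g_t)$ gives $\m(g_t)\ge \tfrac12(|\Sh|/\omega_{n-1})^{(n-2)/(n-1)}$ for each small $t$. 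Since equality holds at $t=0$, one needs $\frac{d}{dt}\big|_{t=0}\m(g_t)\ge 0$, i.e.\ $\E(g)\ge 2\m(g)$; combined with $\m(g)\ge\tfrac12\E(g)$ from Proposition~\ref{prop-mass-capacity} this forces equality. Your anticipated technical issues (regularity of $u_t$ across $\Sigma$, and $\Sh^{(t)}$ staying away from $\Sigma$ for small $t$) are exactly the ones the paper addresses, and your proposed resolutions are correct.
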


\begin{proof}
We adopt the notations in \cite{B,BL}. On $M$, 
let $g_0=g$ and define
$
g_t=u_t(x)^{\frac{4}{n-2}}g_0 ,
$
where $u_t$ is defined by
\begin{align*}
u_t(x)=1+\int_0^tv_s(x)ds
\end{align*}
and $v_t(x)$ satisfies
\be
\left\{ 
\begin{split}
\Delta_{g_0}v_t(x)= & \ 0, \ \text{outside} \  \Sht \\
v_t(x)= & \ 0,  \ \text{on and inside}  \ {\Sht}\\
\lim_{x\rightarrow \infty}v_t(x)= & -e^{-t} .
\end{split}
\right.
\ee
Here $\Sht$ is the outmost minimal area enclosure of $\Sh = \Sh^{(0)}$ in $(M, g_t)$. 
Because $\Sh$ is disjoint from $ \Sigma$ in the initial $(M, g)$ and $g$ is Lipschitz across $\Sigma$,
an examination of the proof of Theorem 2 in \cite{B} and Theorem 2.2 in \cite{BL} shows 
that $\{ g_t \}_{|t| < \epsilon } $ exists for small $\epsilon$, and 
$\Sht $ does not touch $\Sh$ for $ t > 0 $ and converges to $ \Sh $ as $t \to 0$.
As a result, for small $t$, $\Sht $ does not touch $\Sigma$ and is minimal and strictly outer-minimizing in $(M, g_t)$.

For convenience, let $M_t$ denote the exterior of $\Sht $ in $M$. 
Since $v_t$ is harmonic on $(M_t, g_0)$, as in the proof of Proposition \ref{prop-mass-capacity},
we know the restriction of $v_t$ to $ \Sigma$ is smooth,  and $v_t$ is smooth up to $ \Sigma$ from its both sides in $M$.
The same conclusion also holds for $u_t$. 

As a result, $(M_t, g_t)$ is a manifold with corners along $ \Sigma$, 
has nonnegative scalar curvature away from $ \Sigma$ and satisfies the mean curvature condition across $\Sigma$.
Since $ \Sht = \p M_t $ is outer-minimizing in $(M_t, g_t)$, the Riemmannian Penrose inequality in \cite{B, BL}, formulated 
for manifolds with corners \cite{MM16}, shows
\be \label{eq-rpi-gt}
\m (g_t) \ge \frac12 \left( \frac{ |\Sht|_{g_t}}{  \omega_{n-1} } \right)^\frac{n-2}{n-1}.
\ee
Here  $ | \Sht |_{g_t}$ is the area of $ \Sht $ in $(M, g_t)$. By Theorem 3 in \cite{B} and Lemma 2.3 in \cite{BL}, 
this area is a constant, that is
\be \label{eq-area-st}
 | \Sht |_{ g_t} = | \Sh | .
\ee

We are interested in the change of $\m (g_t)$ at $ t = 0 $. 
By Equation (113) in \cite{B} and Lemma 2.7 in \cite{BL}, 
\be
\frac{d}{d t} |_{t=0} \m (g_t) = - 2 \m (\tilde g) = \E (g) - 2 \m (g) ,
\ee
where $ \tilde g$, $ \E (g)$ are the conformally deformed metric,  the capacity constant, respectively, 
in the proof of Proposition \ref{prop-mass-capacity}. 
If $ \m (g)  >  \frac12 \E(g)$, then 
\be
\frac{d}{d t} |_{t=0} \m (g_t)  < 0. 
\ee
At $ t =0$,   it is assumed $ \m (g) = \frac12 \left( \frac{ |\Sh|}{  \omega_{n-1} } \right)^\frac{n-2}{n-1} $.  Thus,
for small $ t > 0$, 
\be
 \m (g_t ) <  \frac12 \left( \frac{ |\Sh|}{  \omega_{n-1} } \right)^\frac{n-2}{n-1}  .
\ee
This is a contradiction to  \eqref{eq-rpi-gt} and \eqref{eq-area-st}.

Therefore, $(M, g)$ satisfies $\m (g) = \frac12 \E (g)$. By the rigidity statement in Proposition \ref{prop-mass-capacity}, 
$g$ is smooth across $\Sigma$ and $(M, g)$ is isometric to a spatial Schwarzschild manifold outside its horizon. 
\end{proof}

\begin{rema}
 $\Sigma$ does not need to enclose the manifold boundary $\Sh$ in Theorem \ref{thm-rigidity}.
\end{rema}

Theorem \ref{thm-main} now follows from Theorem \ref{thm-rigidity}, because, if  $(M, g)$ is the manifold 
constructed from $(\Omega, g_{_\Omega} )$ and $ (N, g_{_N})$ in Remark \ref{rem-main}, 
then $\Sh = \p M $ is strictly outer-minimizing in $(M, g)$ by conditions (i) and (ii). Thus
Theorem \ref{thm-rigidity} applies to show $g$ is smooth and $(M, g)$ is isometric to a Schwarzschild manifold.

Corollary \ref{cor-iso-hypersurface} follows from the following theorem.

\begin{theo} \label{thm-iso-hypersurface}
Let $ \Sigma \subset (\mathbb{M}_m, g_m)  $ be a closed hypersurface enclosing the boundary $\p \mathbb{M}_m$
in a spatial Schwarzschild manifold with mass $ m >0$. Suppose $\Sigma$ is outer-minimizing. 
Let $ \tilde \Sigma \subset (\mathbb{M}_m, g_m) $ be another closed hypersurface enclosing $\p \mathbb{M}$.
Suppose $ \iota: \Sigma \to \tilde \Sigma$ is an isometry and 
$ \tilde H \circ \iota (p)  \ge H (p)$ for any $ p \in \Sigma$, 
where $H$, $\tilde H$ denote the mean curvature of $\Sigma$, $ \tilde \Sigma$, respectively. 
Then  $\tilde \Sigma= \Sigma$ up to a rigid motion of $(\mathbb{M}_m, g_m)$.
\end{theo}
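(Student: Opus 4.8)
The plan is to reduce the rigidity of H-isometric hypersurfaces in $(\mathbb{M}_m, g_m)$ to the rigidity statement of Theorem \ref{thm-rigidity} (equivalently, Theorem \ref{thm-main}). Given $\Sigma$ and $\tilde\Sigma$ as in the statement, let $\Omega$ be the bounded region in $(\mathbb{M}_m, g_m)$ enclosed by $\tilde\Sigma$ and the horizon $\partial\mathbb{M}_m$, so that $(\Omega, g_m)$ is a compact manifold with scalar curvature zero, with boundary $\partial\Omega = \tilde\Sigma \cup \partial\mathbb{M}_m$; the horizon piece is minimal. Let $N$ be the unbounded region in $(\mathbb{M}_m, g_m)$ that lies outside $\Sigma$ (on the infinity side), so that $(N, g_m)$ is asymptotically flat with boundary $\partial N = \Sigma$, scalar curvature zero, and $\m(g_m) = m$. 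By hypothesis there is an isometry $\iota: \Sigma \to \tilde\Sigma$, and the induced metrics match; the inequality $\tilde H \circ \iota \ge H$ reads, after identifying $\tilde\Sigma$ (the boundary of $\Omega$, with outward normal pointing away from the horizon) with $\partial N$ (with infinity-pointing normal), precisely as $H_{\Omega} \ge H_{N}$, since on $\Sigma$ the infinity-pointing normal is the outward normal of $N$. Thus conditions (i) and (iii) of Theorem \ref{thm-main} are in place provided the horizon piece of $\partial\Omega$ is strictly outer-minimizing in $(\Omega, g_m)$ and $\Sigma = \partial N$ is outer-minimizing in $(N, g_m)$.

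The outer-minimizing hypothesis on $\Sigma$ in $(\mathbb{M}_m, g_m)$ gives exactly that $\partial N$ is outer-minimizing in $(N, g_m)$, so condition (ii) holds. For condition (i), one uses the well-known fact that in a spatial Schwarzschild manifold the horizon $\partial\mathbb{M}_m$ is strictly outer-minimizing in the whole manifold, hence a fortiori in the subdomain $\Omega$: any hypersurface in $\Omega$ enclosing $\partial\mathbb{M}_m$ encloses it in $(\mathbb{M}_m, g_m)$ and so has area strictly greater than $|\partial\mathbb{M}_m|$. Finally, the critical-mass identity is automatic: $|\partial\mathbb{M}_m|_{g_m} = \omega_{n-1}\,m$ in the metric $g_m$ (the horizon sphere $\{|x| = (m/2)^{1/(n-2)}\}$ has $g_m$-area $\omega_{n-1} m$), so
\[
\frac12\left(\frac{|\partial\mathbb{M}_m|}{\omega_{n-1}}\right)^{\frac{n-2}{n-1}} = \frac{m}{2} \cdot \text{(appropriate power)} = m = \m(g_m);
\]
I will need to carry out this normalization carefully, but it is a direct computation in the explicit Schwarzschild metric. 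With all hypotheses verified, Theorem \ref{thm-main} applies to $(\Omega, g_m)$ and $(N, g_m)$: it yields that $\Sigma$ and $\tilde\Sigma$ have the same second fundamental forms under $\iota$, and that there is an isometric embedding of $\Sigma$ (hence of $\tilde\Sigma$) into a spatial Schwarzschild manifold of the same mass $m$, with $(\Omega, g_m)$ isometric to the bounded region it cuts off together with the horizon.

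To conclude that $\tilde\Sigma = \Sigma$ up to a rigid motion, I argue as follows. The isometry $F: \Omega \to \Omega_m$ produced by Theorem \ref{thm-main} is an isometry of a region in $(\mathbb{M}_m, g_m)$ onto another region in $(\mathbb{M}_m, g_m)$, carrying the horizon to the horizon and $\tilde\Sigma$ to $\partial\Omega_m \setminus \partial\mathbb{M}_m$. Since $(\mathbb{M}_m, g_m)$ is the doubling across the horizon of a static manifold with an explicit warped-product structure, any isometry of $(\mathbb{M}_m, g_m)$ restricts to an isometry of the horizon round sphere; conversely, an isometry between two domains both containing the horizon extends to a global isometry of $(\mathbb{M}_m, g_m)$, i.e. a rigid motion. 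Composing $F$ with the known global isometry identifying the image Schwarzschild model with the ambient one, and using that $F$ restricted to $\tilde\Sigma$ agrees with $\iota^{-1}$ followed by the given inclusion $\Sigma \hookrightarrow \mathbb{M}_m$, one obtains a rigid motion of $(\mathbb{M}_m, g_m)$ carrying $\tilde\Sigma$ onto $\Sigma$.

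The main obstacle I expect is not the application of Theorem \ref{thm-main} — that is essentially bookkeeping about which normal is which and checking the outer-minimizing conditions — but rather the final step of upgrading the abstract isometry $F$ to an honest rigid motion of the \emph{fixed} ambient $(\mathbb{M}_m, g_m)$. This requires knowing that the isometry group of $(\mathbb{M}_m, g_m)$ acts transitively enough (it is $O(n)$ acting on the round spheres $\{|x| = r\}$) and that an isometry between two horizon-bounded regions is the restriction of an element of this group; the delicate point is that $F$ a priori lands in an \emph{a priori different} copy of the Schwarzschild model, and one must pin down that the embedding data (the second fundamental form plus induced metric, now matching on both sides) together with the static structure forces $\tilde\Sigma$ into the congruence class of $\Sigma$. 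I would handle this by a direct analysis of the radial structure: the level sets of $|x|$ are totally umbilic with prescribed mean curvature, and matching induced metric plus second fundamental form of $\Sigma$ determines its position up to the $O(n)$-action, which is exactly the rigid-motion group of $(\mathbb{M}_m, g_m)$.
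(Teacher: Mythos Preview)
Your reduction to Theorem \ref{thm-main} is exactly the paper's argument: set $\Omega$ to be the region between $\tilde\Sigma$ and the horizon, $N$ to be the exterior of $\Sigma$, check the outer-minimizing conditions and the mass--area identity, and apply Theorem \ref{thm-main} (via Remark \ref{rem-main}) to conclude the glued metric is smooth and the second fundamental forms of $\Sigma$ and $\tilde\Sigma$ agree under $\iota$. (Your intermediate normalization $|\partial\mathbb{M}_m| = \omega_{n-1} m$ is not quite right --- the correct value is $\omega_{n-1}(2m)^{(n-1)/(n-2)}$ --- but you flag this as a computation to redo and the final identity $\m(g_m) = \tfrac12\bigl(|\partial\mathbb{M}_m|/\omega_{n-1}\bigr)^{(n-2)/(n-1)}$ does hold.)

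Where you diverge from the paper is the last step, promoting $\iota$ to a rigid motion. You propose to take the abstract isometry $F:(M,g)\to(\mathbb{M}_m,g_m)$ from the conclusion of Theorem \ref{thm-main}, argue that its restrictions to $N$ and to $\Omega$ each extend to global isometries of $(\mathbb{M}_m,g_m)$, and compose. This can be made to work (e.g.\ via real-analyticity of $g_m$ and analytic continuation of local isometries on a simply connected manifold), but as you note it requires justification beyond what you have written. The paper's route is considerably shorter and bypasses the global-extension issue entirely: once the glued metric is smooth across $\Sigma$, the full ambient curvature of $g_m$ at $p\in\Sigma$ must agree, under the identification $\iota$, with that at $\iota(p)\in\tilde\Sigma$. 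In the rotationally symmetric form $g_m = (1-2m r^{2-n})^{-1}\,dr^2 + r^2\,d\sigma_o$, one computes
\[
|\mathrm{Ric}(g_m)|^2 = n(n-1)(n-2)^2\,m^2\,r^{-2n},
\]
which is a strictly monotone function of $r$. Hence $\iota$ preserves $r$ pointwise, and an $r$-preserving isometry between hypersurfaces in $(\mathbb{M}_m,g_m)$ is the restriction of an element of the rotation group $O(n)$, which is exactly the isometry group of $(\mathbb{M}_m,g_m)$. This pointwise curvature-invariant argument is what you are groping toward in your last paragraph (``direct analysis of the radial structure''), but it is both simpler and more robust than matching second fundamental forms against umbilic spheres.
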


\begin{proof}
Let $ \Omega$ be the finite region bounded by $\tilde \Sigma$ and $ \p \mathbb{M}_m $,
$ \p \mathbb{M}_m $ is strictly outer-minimizing in $(\Omega, g_m)$.
Let  $N$ be the unbounded region that is exterior to $ \Sigma$, then $\Sigma$ is outer-minimizing in $(N, g_m)$.
By Theorem \ref{thm-main} and Remark \ref{rem-main}, 
the manifold $(M, g)$ constructed from $(\Omega, g_m)$ and $(N, g_m)$ is smooth. 
In particular, $\Sigma$ and $\tilde \Sigma$ have the same second fundamental forms, and
$g_m$ has the same curvature quantities at $ p$ and $F(p)$, $ \forall \, p \in \Sigma$.
Writing $g_m$ in the rotationally symmetric form
$ g_m = \left( 1 - \frac{2m}{ r^{n-2} } \right)^{-1} \, d r^2 + r^2 d \sigma_o ,$
where $ \sigma_o$ is the standard round metric on the $(n-1)$-sphere, 
one knows the length square of the Ricci curvature of $g_m$ is 
$ | \mathrm{Ric} (g_m) |^2 = n (n-1) (n-2)^2 m^2 r^{-2n} $.
Since it is the same at $p$ and $F(p)$, we see that $F$ preserves $r$. From this, it is not hard to see that $F$ is the restriction of a rotation of $(\mathbb{M}_m, g_m)$.
\end{proof}

In general, the outer-minimizing condition is a global condition that is not easy to check. 
However, in a Schwarzschild manifold, a local condition that $\Sigma$ is starshaped with positive mean curvature guarantees $\Sigma$ is outer-minimizing. In particular, Theorem \ref{thm-iso-hypersurface} applies to these $\Sigma$ in $(\mathbb{M}_m, g_m)$.

\vspace{.4cm}

\noindent {\bf Acknowledgements}.  
Siyuan Lu acknowledges the support of NSERC Discovery Grant. Pengzi Miao acknowledges the support of NSF Grant DMS-1906423.

\end{document}